\tikzstyle{vertex}=[circle,draw=black,fill=black,inner sep=0,minimum size=5pt,text=white,font=\footnotesize]
\theoremstyle{plain}
\newtheorem{theorem}{Theorem}[section]
\newtheorem{claim}[theorem]{Claim}
\newtheorem{lemma}[theorem]{Lemma}
\theoremstyle{definition}
\newtheorem{definition}{Definition}
\title{Exponential Erd\H{o}s-Szekeres theorem for matrices}
\author{Recep Altar \c{C}i\c{c}eksiz\thanks{Ume\r{a} University, \emph{e-mail}: \textbf{altar.ciceksiz@umu.se}},
Zhihan Jin\thanks{ETH Zurich, \emph{e-mail}: \textbf{zhihan.jin@ifor.math.ethz.ch }},
Eero R\"aty\thanks{Ume\r{a} University, \emph{e-mail}: \textbf{eero.raty@umu.se}},
Istv\'an Tomon\thanks{Ume\r{a} University, \emph{e-mail}: \textbf{istvan.tomon@umu.se}}}
\date{}
\begin{document}
	\sloppy 
	
	\maketitle

\begin{abstract}
    In 1993, Fishburn and Graham established the following qualitative extension of the classical Erd\H{o}s-Szekeres theorem. If $N$ is sufficiently large with respect to $n$, then any $N\times N$ real matrix contains an $n\times n$ submatrix in which every row and every column is monotone. We prove that the smallest such $N$ is at most $2^{n^{4+o(1)}}$, greatly improving the previously best known double-exponential upper bound, and getting close to the best known lower bound $n^{n/2}$.

    In particular, we prove the following surprising sharp transition in the asymmetric setting. On one hand, every $8n^2\times 2^{n^{4+o(1)}}$ matrix contains an $n\times n$ submatrix, in which every row is mononote. On the other hand, there exist $n^{2}/6\times 2^{2^{n^{1-o(1)}}}$  matrices containing no such submatrix .  
\end{abstract}

\section{Introduction}

The Erd\H{o}s-Szekeres theorem \cite{ESz35} from 1935 is one of the cornerstone results of Ramsey theory, with countless applications in analysis, combinatorics, geometry and logic. It states that any sequence of $(n-1)^2+1$ real numbers contains a monotone increasing or decreasing subsequence of length $n$, and this bound is the best possible. See Steele \cite{Steele} for several different proofs and applications.

Since then, many generalizations and extensions of the Erd\H{o}s-Szekeres theorem are proposed  \cite{BM1,BM2,FG93,K73,K53,LS18,M69,SzT01}, among which one of the most natural is due to Fishburn and Graham \cite{FG93}. A \emph{$d$-dimensional array} is a function $A:S_1\times\dots\times S_d\rightarrow \mathbb{R}$, where $S_1,\dots,S_d$ are finite subsets of integers. Say that $A$ is \emph{monotone} if $f(x)=A(a_1,\dots,a_{i-1},x,a_{i+1},\dots,a_d)$ is a monotone function for every fixed $a_1,\dots,a_{i-1},a_{i+1},\dots,a_{d}$, and whether it is increasing or decreasing only depends on $i$. With this notation in our hand, the result of Fishburn and Graham states that for every $d$ and $n$, there exists a smallest number $N=M_d(n)$ such that every $d$-dimensional array of size $N\times\dots\times N$ contains an $n\times\dots\times n$ sized $d$-dimensional monotone subarray. Observe that the Erd\H{o}s-Szekeres theorem is equivalent to the statement that $M_1(n)=(n-1)^2+1$. The main problem we are interested in is finding the order of magnitude of $M_d(n)$ for $d\geq 2$.

The first proofs of Fishburn and Graham \cite{FG93} of the existence of $M_d(n)$ gave Ackermann-type upper bounds of order $d$ for $d\geq 4$, while in case $d=2$, they found that $M_2(n)\leq \mbox{tw}_5(O(n))$. Here, the \emph{tower function} is defined as $\mbox{tw}_1(x):=x$ and $\mbox{tw}_k(x):=2^{\mbox{tw}_{k-1}(x)}$. On the other hand, the best known lower bound for every $d\geq 2$ is $$M_d(n)\geq n^{(1-1/d)n^{d-1}},$$ due to a simple probabilistic argument. Recently, the upper bounds have been greatly improved in every dimension. Buci\'c, Sudakov, and Tran \cite{BST} showed that 
$$M_2(n)\leq 2^{2^{O(n)}}, M_3(n)\leq 2^{2^{O(n^2)}}, \mbox{ and } M_d(n)\leq \mbox{tw}_4(O_d(n^{d-1})) \mbox{ for } d\geq 4.$$   
Lychev \cite{L21} slightly improved the upper bound for $d=2$, but the improved bound was still of the order $2^{2^{O(n)}}$. Gir\~ao, Kronenberg, and Scott \cite{GKS} removed one exponential for $d\geq 4$, and established the inequality $M_d(n)\leq 2^{2^{O_d(n^{d-1})}}$, which then holds uniformly for every $d\geq 2$. They proved this bound by considering a more general problem about the Ramsey properties of the Cartesian products of graphs. Despite the recent progress, there is still an exponential gap between the best known lower and upper bound for every $d\geq 2$. Among these cases, perhaps the most puzzling was whether $M_2(n)$ grows exponentially or double-exponentially. Our main theorem answers this question.

\begin{theorem}\label{thm:main}
    There exists a constant $c>0$ such that $M_2(n)\leq 2^{cn^{4}(\log n)^2}$.
\end{theorem}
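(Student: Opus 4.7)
The plan is to derive Theorem~\ref{thm:main} as a two-stage consequence of the asymmetric lemma stated in the abstract, which produces an $n \times n$ rows-monotone submatrix inside any $8n^2 \times 2^{n^{4+o(1)}}$ matrix. First, I would apply this lemma with an inflated parameter $k = \Theta(n\sqrt{\log n})$: since $2^{k^{4+o(1)}} = 2^{n^{4+o(1)}(\log n)^{2+o(1)}} \leq N$ and $8k^2 \leq N$, the hypothesis is satisfied and we obtain a $k \times k$ submatrix $B$ of our original matrix in which every row is monotone. After fixing the common monotonicity direction by pigeonhole (WLOG increasing), the task reduces to the self-contained sub-problem of extracting an $n \times n$ submatrix of $B$ in which the columns are also monotone, since row-monotonicity is automatically inherited under restriction.

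The second stage is the technical heart: every $k \times k$ matrix with monotone rows, $k = \Theta(n\sqrt{\log n})$, must contain an $n \times n$ submatrix with monotone columns. My approach would be to encode each column $c$ of $B$ by the sign-pattern $\sigma_c(i,j) = \operatorname{sign}(A(j,c) - A(i,c))$ for $i < j$, so that column $c$ is monotone on a row-set $R$ exactly when $\sigma_c$ is monochromatic on $\binom{R}{2}$. A single application of Erd\H{o}s--Szekeres per column produces, for each $c$, a row-subset of size $\sqrt{k}$ on which $\sigma_c$ is monochromatic, and the remaining task is to find $n$ columns whose monochromatic row-subsets overlap on a common $n$-set. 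The crucial extra input is the rows-monotone hypothesis, which linearly orders the columns pointwise (for $c<c'$, $A(r,c) \leq A(r,c')$ for every $r$) and thereby severely restricts how the patterns $\sigma_c$ can vary across $c$, facilitating a sunflower/pigeonhole-type extraction.

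The main obstacle will be this second stage. A direct application of the asymmetric lemma in transposed form would demand $k$ doubly exponential in $n$, well above the target $k = \Theta(n\sqrt{\log n})$, so progress must genuinely exploit the rows-monotone structure. I anticipate the proof proceeding in $O(\log n)$ iterative rounds, each losing only a $\operatorname{poly}(n)$ factor in the dimensions while doubling (or otherwise improving) the guaranteed column-structure; the accumulated $\log n$ factors then balance out to the $(\log n)^{2}$ appearing in the exponent of the main bound.
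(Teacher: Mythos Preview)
Your two-stage plan has a genuine gap in the second stage, and the intuition you offer for why row-monotonicity helps is incorrect. Given any $k\times k$ matrix $B$, setting $A(i,j)=B(i,j)+Mj$ for $M$ large produces a matrix $A$ whose rows are all increasing, yet whose column structure---and hence every sign-pattern $\sigma_c$---is identical to that of $B$. So the row-monotone hypothesis imposes \emph{no} constraint whatsoever on how the $\sigma_c$ vary across $c$, and your second stage is equivalent to the hypothesis-free assertion ``every $k\times k$ matrix contains an $n\times n$ column-monotone submatrix''. A random construction (independent uniform permutations as columns) shows this fails for $k$ as large as $n^{\Theta(n)}$; in particular no polynomial $k$ can work, let alone $k=\Theta(n\sqrt{\log n})$. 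Your proposed iterative/sunflower scheme cannot rescue this, because the obstruction is information-theoretic, not a matter of cleverness in the extraction.

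The paper's argument reverses the order of your two stages, and this reversal is the whole point. Starting from a $64n^4\times N$ matrix, one first applies Erd\H{o}s--Szekeres to each \emph{column} to find a monotone subsequence of length $8n^2$, and then pigeonholes over the $2\binom{64n^4}{8n^2}=2^{O(n^2\log n)}$ possible row-sets (and direction) to obtain an $8n^2\times N'$ submatrix whose columns are already monotone, with $N'\ge N/2^{O(n^2\log n)}$ still of size $2^{\Omega(n^4(\log n)^2)}$. Only then is Theorem~\ref{thm:main_upperbound} invoked to extract an $n\times n$ row-monotone submatrix, which is automatically fully monotone since the columns were handled first. The column step is cheap---it costs polynomially many rows and a sub-exponential factor in the column count---so it must come \emph{before} the expensive row step, not after; doing it afterwards, as you propose, destroys the exponential supply of columns that Theorem~\ref{thm:main_upperbound} needs.
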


 In what follows, we consider 2-dimensional arrays, which we refer to simply as matrices. Say that a matrix is \emph{row-monotone} if every row is monotone increasing or every row is monotone decreasing. In \cite{BST}, in order to prove a double-exponential upper bound on $M_2(n)$, a key idea is to show that any $2n\times N$ matrix contains an $n\times n$ row-monotone submatrix if $N>(n-1)^{2^{2n}}$. A natural idea would be to show that this bound on $N$ can be significantly improved. Unfortunately, this is not possible due to a construction of Burkill and Mirsky \cite{BM1}, see also Lichev \cite{L21}.

 Our key contribution is the following theorem, which shows that by slightly increasing the number of rows, we can indeed decrease the number of columns significantly in order to find an $n\times n$ row-monotone submatrix.

\begin{theorem}\label{thm:main_upperbound}
There exists $c>0$ such that if $N\geq 2^{cn^{4}(\log_2 n)^2}$, then every $8n^2\times N$ matrix contains a row-monotone $n\times n$ matrix. 
\end{theorem}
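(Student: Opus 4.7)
The plan is to reformulate the problem in terms of sign vectors of column pairs, reduce it to finding a monochromatic chain in a transitively coloured complete graph, and then build such a chain via iterative refinement using Erd\H{o}s-Szekeres level decompositions.

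For each pair of columns $i<j$ in $[N]$, define the sign vector $\sigma_{ij}\in\{+,-\}^{8n^2}$ by $(\sigma_{ij})_r=\operatorname{sgn}(A(r,j)-A(r,i))$. These sign vectors satisfy a \emph{linear transitivity}: if $i<j<k$ and $\sigma_{ij}=\sigma_{jk}$, then $\sigma_{ik}=\sigma_{ij}$, because in every row the inequalities $A(r,i)<A(r,j)<A(r,k)$ (or their reverse) force the sign of $A(r,k)-A(r,i)$. Consequently, if I can produce columns $c_1<\cdots<c_n$ whose $n-1$ consecutive sign vectors all equal some common $\sigma\in\{+,-\}^{8n^2}$, then by iterated transitivity all $\binom{n}{2}$ pairwise sign vectors equal $\sigma$. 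Among the $8n^2$ coordinates of $\sigma$, the majority sign occupies at least $4n^2\geq n$ rows, and restricting to these rows yields the desired row-monotone $n\times n$ submatrix. The theorem therefore reduces to finding a monochromatic $(n-1)$-edge consecutive path in the ordered complete graph on $[N]$ whose edges are coloured by sign vectors.

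To produce such a chain, I would use the classical Erd\H{o}s-Szekeres level decomposition within each row. For each row $r$ and column $c$, let $u_r(c)$ (resp.\ $d_r(c)$) be the length of the longest increasing (resp.\ decreasing) subsequence of row $r$ ending at $c$; if $c<c'$ and $u_r(c)=u_r(c')$, then $A(r,c)>A(r,c')$. In particular, $n$ columns sharing the same exact vector $(u_r(c))_r$ already form a row-monotone-decreasing submatrix, but this vector lies in $[N]^{8n^2}$, far too large for a direct pigeonhole. The plan is to compress the level data (e.g.\ by logarithmic rounding to a label space of size roughly $2^{O(n^2\log n)}$) and build the chain $c_1<\cdots<c_n$ iteratively, at each step restricting the candidate columns to those whose compressed labels are compatible with the sign-vector target. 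Together with a pigeonhole over roughly $O(n^2\log n)$ refinement rounds needed to align all coordinates of $\sigma$ along the length-$n$ chain, this should yield a column budget of $2^{O(n^4\log^2 n)}$ matching the theorem's exponent. The main obstacle, which I expect to be the technical heart of the proof, is that logarithmic-level equality does not by itself determine the sign of $A(r,c')-A(r,c)$, so one must design the refinement carefully to ensure that repeated coarse pigeonholes eventually force the desired sign-vector consistency; the slack of $8n^2$ rows (rather than the minimum $2n$) is precisely what allows the majority argument at the final step to absorb the errors introduced by the compression.
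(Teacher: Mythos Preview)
Your opening reduction is correct and in spirit matches the paper's setup: the sign vectors $\sigma_{ij}$ are precisely the labels the paper attaches to nodes of its binary tree, and the semi-transitivity you observe is what turns a monochromatic consecutive path into a monochromatic clique. But at this level of abstraction nothing has been gained. With $q=2^{8n^2}$ colours and only the consistency property $\sigma_{ij}=\sigma_{jk}\Rightarrow\sigma_{ik}=\sigma_{ij}$, the standard injectivity argument (map each column $j$ to the vector of longest-$\sigma$-clique lengths ending at $j$) yields only $N\le (n-1)^{q}$, i.e.\ the double-exponential bound of Buci\'c--Sudakov--Tran; and the Burkill--Mirsky construction shows this is essentially sharp for abstract consistent colourings. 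So the reduction to ``monochromatic chain'' cannot by itself give $2^{n^{O(1)}}$; one has to exploit that the colours are actual sign vectors in $\{+,-\}^{8n^2}$, not arbitrary labels.

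Your proposed mechanism for doing so --- Erd\H{o}s--Szekeres level functions, logarithmic compression, and ``iterative refinement'' --- is not a proof. You state the target arithmetic ($O(n^2\log n)$ rounds each costing $2^{O(n^2\log n)}$ columns) but give no rule for what a round does or why it pins down the sign in any coordinate, and you explicitly flag the central obstacle (compressed-level equality does not determine signs) as unresolved. I do not see how to complete this line. In particular, the slack of $8n^2$ rows is \emph{not} used, as you suggest, to absorb compression errors via a final majority vote --- once you have a monochromatic chain in all $8n^2$ coordinates the majority step is trivial; the difficulty is getting the chain, and that is where the row count matters. The paper exploits the $8n^2$ rows in a completely different place: it first extracts a binary-tree-like subsequence of length $2^m$ with $m=\Theta(n^2\log^2 n)$ (so $\sigma_{ij}$ depends only on the common ancestor of $i,j$), then prunes to a \emph{layered} subtree of height $t=\Theta(\log^2 n)$ (so labels depend only on depth), and finally applies a bipartite Ramsey lemma to the resulting $8n^2\times t$ matrix of depth-labels to find $n$ rows and $\lceil\log_2 n\rceil$ depths on which all labels agree. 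The hypothesis $8n^2\ge 4n\cdot 2^{\lceil\log_2 n\rceil}$ of that Ramsey step is exactly where the $\Theta(n^2)$ threshold enters, matching the lower bound of Theorem~\ref{thm:main_lowerbound}. Your plan contains no analogue of this step.
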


This theorem tells us that roughly $n^2$ rows are enough to guarantee an $n\times n$ row-monotone submatrix if the number of columns is exponential. It comes as a surprise that there is a very sharp transition for this phenomenon around $n^2$. In the next theorem, we prove that if the number of rows is slightly less than $n^{2}$, then we still need a double-exponential number of columns.

\begin{theorem}\label{thm:main_lowerbound}
    For every sufficiently large $n$, there exists an $\lfloor n^{2}/6\rfloor \times 2^{2^{\lfloor n/2\log_2 n\rfloor }}$  matrix with no $n\times n$ row-monotone submatrix.
\end{theorem}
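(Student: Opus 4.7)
Set $m := \lfloor n/(2\log_2 n)\rfloor$. The construction is an amplification of the classical Burkill-Mirsky--Lichev double-exponential lower bound for $M_2$, carefully arranged so that the extra row budget does not destroy the avoidance property.

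\textbf{Step 1: Base matrix.} I would first invoke the known Burkill-Mirsky--Lichev construction \cite{BM1,L21}: for every $k\geq 2$, there exists a $2k\times C_k$ matrix with no $k\times k$ row-monotone submatrix, where $C_k\geq 2^{2^k}$. Applied to $k=m$, this provides a base matrix $B$ of size $2m\times 2^{2^m}$ with no $m\times m$ row-monotone submatrix. This is the ``unit'' that supplies the double-exponential column count.

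\textbf{Step 2: Vertical stacking is not enough.} The first attempt is to vertically stack $\ell$ shifted copies of $B$, each copy placed in a disjoint value range; the shifts ensure that the row-monotonicity of any row depends only on the columns' configuration inside its own copy. A pigeonhole argument then shows that the stacked matrix avoids any $p\times q$ row-monotone submatrix whenever $q\geq m$ and $p > \ell(m-1)$, since otherwise some copy would contain an $m\times m$ row-monotone block. Avoiding $n\times n$ forces $\ell \leq (n-1)/(m-1)\approx 2\log n$, so the row count maxes out at $2m\ell\approx 2n$. Hence plain stacking reaches only a linear number of rows.

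\textbf{Step 3: Two-level nested construction.} To raise the row count to $\lfloor n^2/6\rfloor$, I would organize the rows as a grid $[a]\times[b]$ with $ab=\lfloor n^2/6\rfloor$, and define the entry at row $(i,j)$ and column $c$ by combining two Burkill-Mirsky--Lichev--type matrices $G$ (acting on the block index $i$) and $B$ (acting on the within-block index $j$), for example via
\[
M((i,j),c)= V\cdot G(i,c) + B(j,c),
\]
with $V$ large so that $G(i,c)$ dominates when its values differ, and $B$ breaks ties when $G$ agrees. A row-monotone submatrix in $M$ then forces simultaneous row-monotonicity in $G$ (across the set of distinct block indices $I$ present) and in $B$ (within each $i$-class where $G$ has a tie across the chosen columns). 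Choosing $G$ itself as a Burkill-Mirsky--Lichev matrix with a well-chosen avoidance threshold $k_G$ and $a$ rows, and invoking $B$'s property of avoiding $m\times m$ row-monotone, one would extract a pair of constraints of the form $|I|<k_G$ or $q<k_G$, and $p_i<m$ or $q<m$ for each $i\in I$, which combine to bound the maximum row-monotone size.

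\textbf{Main obstacle.} The principal difficulty is pushing the trade-off so that the total row count reaches $\lfloor n^2/6\rfloor$ rather than the $O(n)$ coming from plain stacking or the $O(m^2)=O(n^2/\log^2 n)$ coming from a naive two-level product. This requires a non-trivial ``sub-multiplicative'' arrangement of the two Burkill-Mirsky--Lichev levels, in which ties in the dominant layer are broken by the subordinate layer in a way that genuinely consumes the degrees of freedom of both simultaneously. The bookkeeping needed to track how a potential $n\times n$ row-monotone submatrix splits across block indices, and to verify that at least one of the two levels is forced into its forbidden regime, is the crux of the argument, and is the step where I would expect the detailed design of $G$ and the value shifts to matter most.
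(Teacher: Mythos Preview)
Your proposal is a plan, not a proof, and the plan does not close. You yourself flag the ``main obstacle'': the two-level product of Burkill--Mirsky--Lichev blocks you describe in Step~3 naturally yields at most $O(m^2)=O(n^2/(\log n)^2)$ rows, and you offer no mechanism that would recover the missing $(\log n)^2$ factor needed to reach $\lfloor n^2/6\rfloor$. Worse, in a construction of the form $M((i,j),c)=V\cdot G(i,c)+B(j,c)$ with $V$ large and $G$ having pairwise distinct entries (as Burkill--Mirsky matrices do), the monotonicity of row $(i,j)$ on a column set $C$ is determined entirely by $G(i,\cdot)|_C$; the subordinate layer $B$ never participates, and the construction degenerates to plain stacking with $|I|$ distinct $G$-rows each repeated $b$ times. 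So Step~3 as written does not actually introduce a second level of constraints.

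The paper's proof is completely different in design and does not pass through Burkill--Mirsky--Lichev at all. It first builds, by a probabilistic argument, a $d\times t$ matrix of sign vectors $s_1,\dots,s_t\in\{-1,+1\}^d$ (with $d=\lfloor n^2/6\rfloor$ and $t=2^{\lfloor n/2\log_2 n\rfloor}$) containing no $n\times\lceil\log_2 n\rceil$ monochromatic submatrix. It then amplifies $t$ to $2^t$ columns by indexing columns with binary strings $y\in\{0,1\}^t$ and setting $u_y=\sum_i 2^i y(i)s_i$; the sign pattern of $u_{y'}-u_y$ is exactly $s_b$ where $b$ is the highest coordinate on which $y$ and $y'$ differ. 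Consequently, an $n\times n$ row-increasing submatrix on rows $R$ and columns $C$ forces $s_b|_R\equiv +1$ for every $b$ arising as a top-difference among pairs in $C$, and since at most $\lceil\log_2 n\rceil-1$ indices $b$ can have $s_b|_R$ constant, one gets $|C|<n$. The $n^2/6$ row count comes for free from the bipartite Ramsey lower bound, not from any amplification of the Burkill--Mirsky parameters; this is the idea your plan is missing.
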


Finally, let us mention that our main theorem has further implications about \emph{lexicographic arrays} as well. A $d$-dimensional array $A$ is \emph{lex-monotone} if there exists a permutation $\sigma\in S_d$ and a sign vector $s\in \{-1,1\}^{d}$ such that
\begin{align*}
    A(a_1,\dots,a_d)&<A(b_1,\dots,b_d)\Leftrightarrow \\
    (s(\sigma(1))\cdot a_{\sigma(1)},\dots,s(\sigma(d))\cdot a_{\sigma(d)})&<_{LEX}(s(\sigma(1))\cdot b_{\sigma(1)},\dots,s(\sigma(d))\cdot b_{\sigma(d)}).
\end{align*}
Here $<_{LEX}$ denotes the lexicographic ordering, that is, $(x_1,\dots,x_d)<_{LEX} (y_1,\dots,y_d)$ if $x_b<y_b$, where $b$ is the smallest index such that $x_b\neq y_b$. Fishburn and Graham \cite{FG93} proved that there exists a smallest $L_d(n)=N$ such that every $d$-dimensional $N\times\dots\times N$ array contains a $d$-dimensional $n\times\dots\times n$ lex-monotone subarray. This result has found applications in poset dimension theory \cite{FFT99} and computational complexity theory \cite{BK10}. 

As every lex-monotone array is also monotone, we trivially have $M_d(n)\leq L_d(n)$. On the other hand, Fishburn and Graham \cite{FG93} proved that $L_2(n)\leq M_2(2n^2-5n+4)$, and Buci\'c, Sudakov, and Tran \cite{BST} established $L_d(n)\leq M_d(2^{O_d(n^{d-2})})$ for $d\geq 3$. Combining this with the result of Gir\~ao, Kronenberg, and Scott \cite{GKS}, we get the best known upper bound $L_d(n)\leq \mbox{tw}_4(O_d(n^{d-2}))$ for $d\geq 3$. However, for $d=2$, Theorem \ref{thm:main} immediately implies the following improvement, which also has the right order on an exponential scale.

\begin{theorem}
    There exists a constant $c>0$ such that $L_2(n)\leq 2^{cn^8(\log n)^2}$.
\end{theorem}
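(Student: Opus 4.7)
The plan is essentially a direct substitution into the reduction of Fishburn and Graham. I would first invoke their inequality $L_2(n) \leq M_2(2n^2 - 5n + 4)$, and then apply Theorem \ref{thm:main} to bound the right-hand side. Substituting $m = 2n^2 - 5n + 4$ into $M_2(m) \leq 2^{cm^4(\log_2 m)^2}$ yields
\[
L_2(n) \leq 2^{c(2n^2 - 5n + 4)^4 (\log_2(2n^2 - 5n + 4))^2}.
\]

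Next, I would simplify the exponent. Since $(2n^2 - 5n + 4)^4 \leq 16 n^8 (1 + o(1))$ and $(\log_2(2n^2 - 5n + 4))^2 = (2\log_2 n + O(1))^2 = 4(\log_2 n)^2 (1 + o(1))$, absorbing the implicit constants and lower-order terms into a new constant $c' > 0$ delivers the desired bound $L_2(n) \leq 2^{c' n^8 (\log n)^2}$ for all sufficiently large $n$, extended to all $n$ by further enlarging $c'$ if necessary.

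Since the entire argument is a one-line substitution, there is no genuine obstacle to overcome; all the new content lies in Theorem \ref{thm:main}, while the Fishburn-Graham reduction is used as a black box. The exponent $n^8$ arises naturally because their reduction squares the parameter (moving from $n$ to $\Theta(n^2)$), and the bound on $M_2$ is roughly exponential in the fourth power of its argument, producing $(n^2)^4 = n^8$. If one wished to push the exponent below $n^8$, the bottleneck would therefore be the Fishburn-Graham reduction itself rather than anything in the proof of Theorem \ref{thm:main_upperbound}.
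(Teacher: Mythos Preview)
Your proposal is correct and matches the paper's approach exactly: the paper states that this theorem follows immediately from Theorem~\ref{thm:main} combined with the Fishburn--Graham inequality $L_2(n)\leq M_2(2n^2-5n+4)$, which is precisely the substitution you carry out.
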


Our paper is organized as follows. In the next section, we introduce our notation and some basic results. Then, in Section \ref{sect:1}, we present our construction for Theorem \ref{thm:main_lowerbound}. Then, in Section \ref{sect:2}, we prove Theorem \ref{thm:main_upperbound}, and we finish our paper with the proof of Theorem \ref{thm:main} in Section \ref{sect:3}.

 \section{Preliminaries}

 We omit the use of floors and ceilings whenever they are not crucial.

 \subsection{Rooted trees}

 Let us introduce some standard (and also some less standard) terminology about \emph{rooted trees}. A \emph{rooted tree} is a pair $(T,r)$, where $T$ is a tree and $r$ is a vertex of $T$, called the \emph{root}. For ease of notation, we write simply $T$ instead of $(T,r)$. The \emph{depth} of a vertex in $T$ is its distance from the root. A vertex $v$ is a \emph{descendant} of a vertex $w$ if $w$ is contained in the unique path connecting $v$ and $r$. In this case, we also say $w$ is an \emph{ancestor} of $v$ (unconventionally, we say that $v$ is both a descendant and an ancestor of itself). Say that $v$ and $w$ are \emph{related} if one is an ancestor of the other. The \emph{children} of a vertex $v$ are the neighbours of $v$ that are also descendants. 

 Given a set of vertices $S\subset V(T)$, their \emph{common ancestor} is the vertex  of largest depth that is an ancestor of every element of $S$. We denote the common ancestor by $\delta_T(S)=\delta(S)$, and in case $S=\{x,y\}$, we write simply $\delta(x,y)$. Note that for any $S\subset V(T)$, $\delta(S)=\delta(x,y)$ for some $x,y\in S$.
 
 Given a set of vertices, it naturally induces a rooted subtree in $T$ as follows (not to confuse with the usual graph theoretic notion of induced subgraph).

 \begin{definition}[Induced subtree]\label{def:inducedtree}
 Given a rooted tree $T$ and $X\subset V(T)$, we define the rooted tree $T[X]$ as follows. The vertices of $T[X]$ are the common ancestors $\delta(v,w)$ for every $v,w\in X$, the root of $T[X]$ is $\delta(X)$, and distinct vertices $x$ and $y$ are joined by an edge if one is an ancestor of the other in $T$, and there is no $z\in V(T[X])\setminus \{x,y\}$ that is the descendant of one, and the ancestor of the other. We refer to $T[X]$ as the \emph{subtree of $T$ induced by $X$}. 
 \end{definition}

 Let us highlight some simple, yet important properties of $T[X]$. The common ancestor function $\delta_{T[X]}$ agrees with $\delta_T$ on the vertices of $T[X]$. Also, given $Y\subset X$, we have $(T[X])[Y]=T[Y]$.

 \begin{claim}\label{claim:union}
     Let $X,Y\subset V(T)$ and assume that the root $x$ of $T[X]$ and root $y$ of $T[Y]$ are not related. Then $T[X\cup Y]$ is the disjoint union of $T[X]$ and $T[Y]$, together with its root $\delta(x,y)$ that is joined to $x$ and $y$.
 \end{claim}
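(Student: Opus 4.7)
The plan is to analyze the vertex set, the root, and the edge set of $T[X\cup Y]$ separately, and match each to the claim. By Definition \ref{def:inducedtree}, every vertex of $T[X\cup Y]$ has the form $\delta(u,w)$ with $u,w\in X\cup Y$, so I will split into three cases depending on which of $X$ and $Y$ contains $u$ and $w$. The two same-set cases immediately produce $V(T[X])$ and $V(T[Y])$, so the heart of the argument is the cross case, for which I will establish the sub-claim that $\delta(u,w)=\delta(x,y)$ whenever $u\in X$ and $w\in Y$. To prove it, I will show that the set of common ancestors of $\{u,w\}$ coincides with that of $\{x,y\}$: the inclusion $\supseteq$ is immediate because $u$ and $w$ are descendants of $x$ and $y$ respectively; for $\subseteq$, I will exploit the unrelatedness of $x$ and $y$ to argue that any candidate common ancestor of $u$ and $w$ lying on the ancestor chain strictly below $x$ (or strictly below $y$) would force $x$ and $y$ to become comparable, a contradiction.

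Granted this sub-claim, the vertex set of $T[X\cup Y]$ equals $V(T[X])\cup V(T[Y])\cup\{\delta(x,y)\}$, and the union is disjoint because every vertex of $V(T[X])$ is a descendant of $x$, while $\delta(x,y)$ is a strict ancestor of $x$ (here we again use unrelatedness, which also gives $x\neq\delta(x,y)\neq y$). The root is $\delta(X\cup Y)$; since it must be a common ancestor of $x$ and $y$ and at the same time no shallower than $\delta(x,y)$, it must equal $\delta(x,y)$.

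For the edges I check the three possible kinds of pairs. Inside $V(T[X])$, no pair is separated by $\delta(x,y)$, since $\delta(x,y)$ lies strictly above $x$ while every vertex of $V(T[X])$ is a descendant of $x$; so edges of $T[X]$ persist inside $T[X\cup Y]$, and symmetrically for $T[Y]$. No edge joins a vertex of $V(T[X])$ to one of $V(T[Y])$, because by the same unrelatedness argument as in the sub-claim, no descendant of $x$ is related in $T$ to any descendant of $y$. Finally, $\delta(x,y)$ is related in $T$ exactly to the vertices of $V(T[X])$ on the $x$-to-$\delta(x,y)$ path and the vertices of $V(T[Y])$ on the $y$-to-$\delta(x,y)$ path; among those, only $x$ and $y$ themselves are adjacent to $\delta(x,y)$ in $T[X\cup Y]$, since every other candidate is separated from $\delta(x,y)$ by $x$ or $y$. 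The main obstacle, and the only step that really uses the unrelatedness of $x$ and $y$, is the sub-claim for cross pairs; once that is in hand, every other part of the argument follows routinely from the linear-chain structure of ancestors in a rooted tree.
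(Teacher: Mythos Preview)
Your proposal is correct and follows the same approach as the paper: the key step in both is the sub-claim that $\delta(u,w)=\delta(x,y)$ whenever $u\in X$ and $w\in Y$, and your proof of it is essentially the paper's argument phrased in terms of the chain of ancestors. The paper simply asserts ``it is enough to show'' this sub-claim and leaves the vertex, root, and edge verifications implicit, whereas you spell them out; one small slip to fix is the phrase ``$\delta(x,y)$ is related in $T$ exactly to the vertices of $V(T[X])$ on the $x$-to-$\delta(x,y)$ path''---in fact $\delta(x,y)$ is an ancestor of (hence related to) \emph{every} vertex of $V(T[X])\cup V(T[Y])$, but your subsequent sentence already gives the right reason why only $x$ and $y$ are adjacent to it.
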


 \begin{proof}
     It is enough to show that for $a\in X$ and $b\in Y$, we have $\delta(a,b)=\delta(x,y)$. Note that $\delta(a,b)$ is related to $x$ as it is an ancestor of $a$. But $\delta(a,b)$ is not a descendant of $x$, as no descendant of $x$ is related to $b$. Therefore, $\delta(a,b)$ is an ancestor of $x$. Similarly, $\delta(a,b)$ is an ancestor of $y$. But then we must have $\delta(a,b)=\delta(x,y)$. 
 \end{proof}

 \medskip

 For $m\in \mathbb{N}$, we will denote by $BT_m$ the \emph{perfect rooted binary tree} of height $m$. That is, $BM_0$ is a single vertex, which is also the root, and  $BT_m$ is constructed by attaching two children to every leaf of $BT_{m-1}$. We will identify the leaves of $BT_m$ with the numbers $1,\dots,2^m$ from left to right. More precisely, if a leaf is identified with $i$ in $BT_{m-1}$, then its two children are identified with $2i-1$ and $2i$ for $i=1,\dots,2^{m-1}$. With the term binary tree, we always refer to a tree that is a perfect binary tree $BT_m$ for some $m$.

 Given two sets of real numbers $A$ and $B$, write $A<B$ if every element of $A$ is less than every element of $B$.

 \begin{claim}\label{claim:distinct_root}
     Let $A,B\subset [2^{m}]$ be two sets of leaves in $BT_m$ such that $A<B$. Then the root of $BT_m[A]$ is distinct from the root of $BT_m[B]$.
 \end{claim}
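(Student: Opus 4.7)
The plan is to argue by contradiction. Suppose the two roots coincide, say $v := \delta(A) = \delta(B)$. I will exploit the crucial structural feature of the perfect binary tree: the set of leaves descended from any vertex is a contiguous interval of $[2^m]$, and the children of $v$ partition that interval into two contiguous sub-intervals $I_L < I_R$.

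First I would observe that since $v = \delta(A)$ is the common ancestor of $A$ of \emph{largest} depth, $A$ cannot be entirely contained in $I_L$ nor entirely in $I_R$, otherwise the corresponding child of $v$ would be a deeper common ancestor. Hence $A$ meets both $I_L$ and $I_R$, that is, there exist $a_L \in A \cap I_L$ and $a_R \in A \cap I_R$, with $a_L < a_R$. Exactly the same reasoning gives $b_L \in B \cap I_L$ and $b_R \in B \cap I_R$. But then $a_R \in I_R$ and $b_L \in I_L$ satisfy $b_L < a_R$ with $a_R \in A$ and $b_L \in B$, contradicting the hypothesis $A < B$.

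The only thing that really needs justification is the interval property of the leaf sets of subtrees, and the fact that the common ancestor of $A$ equals the common ancestor of $\{\min A, \max A\}$ by the final sentence of the paragraph preceding Definition~\ref{def:inducedtree}; both are immediate from the left-to-right labeling $1,\dots,2^m$ of the leaves defined in the paragraph introducing $BT_m$. No step looks like a genuine obstacle — the claim is essentially a consistency check on the labeling convention — so the proof should be only a few lines.
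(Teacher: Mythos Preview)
Your argument is correct and essentially identical to the paper's: both assume the common root $v$ has children whose leaf sets form intervals $I_L<I_R$, observe that $A$ and $B$ must each straddle both intervals (else a child would be a deeper common ancestor), and derive the contradiction $b_L<a_R$. The paper explicitly disposes of the case where $|A|=1$ or $|B|=1$ first (so that $v$ is guaranteed not to be a leaf), which you leave implicit, but this is trivial since if $v$ were a leaf then $A=B=\{v\}$, contradicting $A<B$.
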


 \begin{proof}
     If either $A$ or $B$ has only one element, the claim is trivial, so suppose that $|A|,|B|\geq 2$. Assume that the roots of $BT_m[A]$ and $BT_m[B]$ coincide, and let us denote it by $q$. Note that $q$ is not a leaf, so it has two children, $\ell$ (left) and $r$ (right). Let $L$ be the set of leaves that are  descendants of $\ell$ and let $R$ be the set of leaves that are descendants of $r$. Then $L$ and $R$ are disjoint intervals such that $L<R$. The set $A$ is not contained in either $L$ or $R$, otherwise $\ell$ or $r$ is an ancestor of every element of $A$. Similarly, $B$ is not contained in either $L$ or $R$. But $A, B\subset L\cup R$, so we cannot have $A<B$, which is a contradiction. 
 \end{proof}

 We  highlight that if $A,B$ are two sets of leaves in $BT_m$ such that $BT_m[A]$ and $BT_m[B]$ are binary trees of height $t$, and their roots are not related, then $BT_m[A\cup B]$ is a binary tree of height $t+1$ by Claim \ref{claim:union}.

The final simple claim we need is that in each binary tree, we can find an induced binary tree which only contains vertices from a specified set of depths.

 \begin{claim}\label{claim:levels}
  Let $Z\subset \{0\dots,m-1\}$. Then there exists $Q\subset [2^m]$ such that $BT_m[Q]$ is a binary tree of height $|Z|$, and the depth of each non-leaf vertex of $BT_m[Q]$ with respect to $BT_m$ is contained in $Z$.
\end{claim}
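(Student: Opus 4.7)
The plan is to proceed by induction on $|Z|$, allowing $m$ to vary freely at each stage of the induction.

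The base case $|Z|=0$ is immediate: take $Q$ to be any single leaf of $BT_m$, so that $BT_m[Q]$ is a single vertex, which is a binary tree of height $0$ with no non-leaf vertices.

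For the inductive step, write $k=|Z|\geq 1$ and $z_1=\min Z$. I pick any vertex $u$ at depth $z_1$ of $BT_m$ and let $\ell$ and $r$ denote its two children at depth $z_1+1$. The subtree of $BT_m$ rooted at $\ell$ is isomorphic, as a rooted tree, to $BT_{m-z_1-1}$, with depth $i$ in the smaller tree corresponding to depth $z_1+1+i$ in $BT_m$; the same holds for the subtree rooted at $r$. I then apply the inductive hypothesis inside each of these two copies of $BT_{m-z_1-1}$, with the shifted set $Z'=\{z-z_1-1 : z\in Z\setminus\{z_1\}\}\subset\{0,\ldots,m-z_1-2\}$ of size $k-1$. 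Translating back through the depth shift, this produces leaf sets $Q_L$ (contained in the leaves below $\ell$) and $Q_R$ (below $r$) so that $BT_m[Q_L]$ and $BT_m[Q_R]$ are both perfect binary trees of height $k-1$ whose non-leaf depths, measured in $BT_m$, lie in $Z\setminus\{z_1\}$.

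Setting $Q=Q_L\cup Q_R$, the root of $BT_m[Q_L]$ is a descendant of $\ell$ and the root of $BT_m[Q_R]$ is a descendant of $r$, so these two roots are unrelated. Claim \ref{claim:union} then yields that $BT_m[Q]$ is the disjoint union of $BT_m[Q_L]$ and $BT_m[Q_R]$ together with a new root $\delta(\ell,r)=u$ adjoined and joined to the two previous roots. Hence $BT_m[Q]$ is a perfect binary tree of height $k$, and its set of non-leaf depths is $\{z_1\}\cup(Z\setminus\{z_1\})=Z$, as required. The only delicate point is the bookkeeping of depths when invoking the inductive hypothesis inside a child-subtree of $u$; the natural isomorphism with $BT_{m-z_1-1}$ together with Claim \ref{claim:union} handle this cleanly, and I do not anticipate a substantial obstacle.
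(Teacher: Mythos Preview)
Your inductive argument is correct. The base case and the gluing step via Claim~\ref{claim:union} both work as stated; the only cosmetic point is that the new root furnished by Claim~\ref{claim:union} is $\delta(\text{root}(BT_m[Q_L]),\text{root}(BT_m[Q_R]))$ rather than literally $\delta(\ell,r)$, but these coincide (both equal $u$) since the two roots lie below $\ell$ and $r$ respectively.

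The paper takes a different, non-inductive route: it writes down the set $Q$ explicitly as
\[
Q=\Bigl\{1+\sum_{z\in Z} s_z\cdot 2^{m-1-z}\ :\ s_z\in\{0,1\}\text{ for each }z\in Z\Bigr\},
\]
and then appeals to the observation that the depth of the common ancestor of two leaves $a,b$ in $BT_m$ is one less than the position of the first differing bit in the $m$-bit binary expansions of $a-1$ and $b-1$. This one-line verification bypasses both the induction and the use of Claim~\ref{claim:union}. Your approach, by contrast, is structural: it makes the recursive nature of the construction transparent and reuses the paper's own gluing machinery, at the cost of a bit more bookkeeping with the depth shift. Either argument is entirely adequate for the claim.
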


\begin{proof}
    The set $$Q=\left\{1+\sum_{z\in Z} s_z\cdot 2^{m-1-z}:\forall z\in Z, s_z\in \{0,1\}\right\}$$
    suffices. Indeed, if $a$ and $b$ are leaves, and  the first digit in which the $m$ digit binary expansions of $a-1$ and $b-1$ differ is the $t$-th digit, then $t-1$ is the depth of $\delta(a,b)$.  
\end{proof}

\subsection{Bipartite Ramsey problem}

We will also make use of the following asymmetric variants of the bipartite Ramsey problem. 

\begin{lemma}\label{lemma:zarankiewicz}
    Let $d,t,s,n$ be positive integers such that $t\geq 4s^2$ and $d\geq 4n\cdot 2^s$. Then every $d\times t$ matrix whose entries are colored with red or blue contains an $n\times s$ sized monochromatic submatrix. 
\end{lemma}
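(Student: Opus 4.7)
\medskip

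\noindent\textbf{Proof proposal.} The plan is a double-pigeonhole/averaging argument, where the pigeonhole first fixes a majority color on the rows and then fixes a common $s$-subset of columns on which the majority color persists.

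First, for each of the $d$ rows, record its majority color. By pigeonhole, at least $d/2 \geq 2n\cdot 2^s$ rows share the same majority color; without loss of generality assume this color is red, and restrict to these rows. Every such row has at least $\lceil t/2\rceil$ red entries, and therefore contains at least $\binom{\lceil t/2\rceil}{s}$ all-red $s$-subsets of column indices.  Summing over the selected rows and applying pigeonhole to the $\binom{t}{s}$ possible $s$-subsets, some fixed $s$-subset $S\subseteq [t]$ is all-red in at least
\[
\frac{(d/2)\binom{t/2}{s}}{\binom{t}{s}}
\]
of the selected rows. If this quantity is $\geq n$, these rows together with $S$ give the desired monochromatic $n\times s$ submatrix.

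The main (and only real) obstacle is to bound the ratio $\binom{t/2}{s}/\binom{t}{s}$ sharply enough to match the hypothesis $d\geq 4n\cdot 2^s$. Writing the ratio as $\prod_{i=0}^{s-1}\frac{t/2-i}{t-i}$ and noting that each factor is a decreasing function of $i$, every factor is at least $\frac{t/2-s+1}{t-s+1}\geq \frac12-\frac{s-1}{t}$. Using $t\geq 4s^2$, this is at least $\frac12-\frac{1}{4s}=\frac{1}{2}\bigl(1-\frac{1}{2s}\bigr)$. Bernoulli's inequality then gives
\[
\frac{\binom{t/2}{s}}{\binom{t}{s}}\geq \frac{1}{2^s}\left(1-\frac{1}{2s}\right)^s \geq \frac{1}{2^s}\cdot\frac12=\frac{1}{2^{s+1}}.
\]
Plugging in, the number of rows with all-red $S$ is at least $\frac{d}{2^{s+2}}\geq \frac{4n\cdot 2^s}{2^{s+2}}=n$, which is exactly what we need. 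The only subtle point is keeping the constants honest through Bernoulli so that the hypotheses $t\geq 4s^2$ and $d\geq 4n\cdot 2^s$ suffice with no slack.
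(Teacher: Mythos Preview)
Your proof is correct and follows essentially the same approach as the paper: pigeonhole on the majority color to get $d/2$ rows with at least $t/2$ red entries, count red $s$-subsets of columns in each such row, and pigeonhole on the $\binom{t}{s}$ possible $s$-subsets. The only cosmetic difference is that the paper bounds $\binom{t}{s}/\binom{t/2}{s}\leq 2^s(1+\tfrac{1}{2s})^s\leq 2^{s+1}$ directly, whereas you bound the reciprocal via Bernoulli; the arithmetic and the use of $t\geq 4s^2$ are the same.
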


\begin{proof}
   First, we show that 
   \begin{equation}\label{equ:1}
   n\binom{t}{s}\leq \binom{t/2}{s}\cdot \frac{d}{2}
   \end{equation} 
   is satisfied. Indeed, using that $t\geq 4s^2$, we can write 
   $$\frac{\binom{t}{s}}{\binom{t/2}{s}}=\frac{t}{t/2}\cdot \frac{t-1}{t/2-1}\dots \frac{t-s+1}{t/2-s+1}\leq 2^s \left(1+\frac{1}{2s}\right)^s\leq 2^{s+1},$$
   so the condition $d\geq 4n\cdot 2^s$ implies (\ref{equ:1}).

    Consider a $d\times t$ matrix whose entries are colored red or blue. By symmetry, we can assume that at least half of the rows contain at least $t/2$ red entries. Assign to each such row at least  $\binom{t/2}{s}$ pieces of $s$-tuples of columns, where each column intersects the row in a red entry. As (1) holds, by the pigeonhole principle we can find an $s$-tuple of columns which is assigned to at least $n$ different rows. The intersection of these $s$ columns and $n$ rows gives the desired submatrix.
\end{proof}

\begin{lemma}\label{lemma:zarankiewicz_lower}
    Let $n$ be a positive integer. Then there exists an $n^{2}/6\times 2^{n/2\log_2 n}$ matrix with entries colored red and blue such that it contains no $n\times \lceil\log_2 n\rceil$ sized monochromatic submatrix.
\end{lemma}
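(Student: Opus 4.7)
The plan is to invoke the probabilistic method with a uniform random two-coloring. Let $d = \lfloor n^2/6 \rfloor$, $t = 2^{\lfloor n/(2\log_2 n) \rfloor}$, and $s = \lceil \log_2 n \rceil$. I color each entry of a $d \times t$ matrix independently red or blue with probability $1/2$. A fixed $n \times s$ submatrix is monochromatic with probability $2 \cdot 2^{-ns}$, so by the union bound the expected number of monochromatic $n \times s$ submatrices is at most
\[
E \;:=\; 2\binom{d}{n}\binom{t}{s}\, 2^{-ns} \;\leq\; 2\left(\frac{ed}{n}\right)^n t^{\,s}\, 2^{-ns}.
\]
If I can show $E < 1$, then the probabilistic method immediately yields a coloring with no monochromatic $n \times s$ submatrix, as required.

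The estimate splits cleanly into a row side and a column side. On the row side, using $d \leq n^2/6$ and $s \geq \log_2 n$,
\[
\left(\frac{ed}{n}\right)^n \cdot 2^{-ns} \;\leq\; \left(\frac{en}{6}\right)^n \cdot n^{-n} \;=\; \left(\frac{e}{6}\right)^n.
\]
This is the whole point of the parameter choice: picking $s \approx \log_2 n$ makes $2^{-ns}$ match $n^{-n}$ exactly, which cancels the factor $n^n$ in $(en/6)^n$ and leaves only a geometric $(e/6)^n$ of slack. On the column side, using $s \leq \log_2 n + 1$ together with $\log_2 t \leq n/(2\log_2 n)$,
\[
t^{\,s} \;\leq\; 2^{(\log_2 n + 1)\cdot n/(2\log_2 n)} \;=\; 2^{\,n/2}\cdot 2^{\,n/(2\log_2 n)} \;\leq\; 2^{\,3n/4}
\]
for every sufficiently large $n$. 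Combining the two halves,
\[
E \;\leq\; 2\left(\tfrac{2^{3/4}\, e}{6}\right)^n,
\]
and since $2^{3/4}\, e/6 < 1$, the quantity $E$ tends to $0$ as $n \to \infty$; in particular $E < 1$ for every sufficiently large $n$, which finishes the argument.

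There is no genuine obstacle here, this is a tight but routine first moment computation. The design of the parameters in the lemma statement is what does all the work: the coefficient $1/6$ on the row count and the factor $2\log_2 n$ in the denominator of the column exponent are precisely the choices that let the two sides above multiply to something exponentially small. Floor and ceiling effects are absorbed harmlessly into the asymptotic inequality, and the alternate reading $t = 2^{(n/2)\log_2 n}$ is ruled out by this calculation since it would make $t^s$ dominate everything.
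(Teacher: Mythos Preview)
Your proof is correct and follows essentially the same approach as the paper: a first-moment computation on a uniformly random two-coloring, bounding $2\binom{d}{n}\binom{t}{s}2^{-ns}$ via $(ed/n)^n$ and $t^s$. The only cosmetic difference is that you organize the arithmetic multiplicatively (splitting into a ``row side'' and ``column side''), whereas the paper takes base-$2$ logarithms and runs a single inequality chain; the paper's version happens to give $E<1$ for all $n\geq 2$ rather than just sufficiently large $n$, but this is immaterial since the lemma is only invoked for large $n$.
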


\begin{proof}
    Let $d=n^{2}/6$, $t=2^{n/2\log_2 n}$, and  $s=\lceil\log_2 n\rceil$. Color each entry of a $d\times t$ matrix red or blue with probability $1/2$, independently from each other. Then the expected number of monochromatic $n\times s$ sized submatrices is 
    $$2^{1-ns}\binom{d}{n}\binom{t}{s}\leq 2^{1-ns}\left(\frac{ed}{n}\right)^nt^s.$$
    Taking the base 2 logarithm of the right hand side, we get
    \begin{align*}
        1-ns+n\log_2 \frac{ed}{n}+s\log_2 t&< 1-(n-\log_2 t)s+(-n+n\log_2 n)\\
        &\leq 1-(n-\log_2 t)\log_2 n+(-n+n\log_2 n)\\
        &=1-n+(\log_2 t)\cdot \log_2 n=1-\frac{n}{2}\leq 0.
    \end{align*}
     Here, the first inequality holds by noting that $\frac{ed}{n}<\frac{n}{2}$. Hence, the expectation is less than 1, so there is a coloring with no monochromatic $n\times s$ submatrix.
    \end{proof}

\section{Lower bound for row monotone matrices --- Proof of Theorem \ref{thm:main_lowerbound}}\label{sect:1}

We start with the proof of Theorem \ref{thm:main_lowerbound}, as it also serves as the main inspiration for the proof of Theorem \ref{thm:main_upperbound}. Before we proceed, let us recall the definition of \emph{colexicographic ordering} for binary sequences. Given $x,y\in \{0,1\}^N$ such that $x\neq y$, let $\delta(x,y)$ denote the largest coordinate $b$ such that $x(b)\neq y(b)$ (as we shall see later, it is not a coincidence that we use the same $\delta$ as in the case of common ancestors). Then, we write $x<_{COL} y$ if $y(\delta(x,y))=1$. It is well known that $<_{COL}$ is a total ordering on $\{0,1\}^N$, and it is called the \emph{colexicographic ordering}.

\begin{proof}[Proof of Theorem \ref{thm:main_lowerbound}]
    Let $d=n^{2}/6$, and $t=2^{n/2\log_2 n}$. We define a $d\times 2^t$ sized integer matrix $A$ with no $n\times n$ row-monotone submatrix.
    
    By Lemma \ref{lemma:zarankiewicz_lower}, there exists a $d\times t$ sized matrix with entries $-1$ and $1$ that does not contain an $n\times \lceil\log_2 n\rceil$ sized submatrix with all entries $1$ or all entries $-1$. Let $s_1,\dots,s_t\in \{-1,1\}^d$ be the column vectors of such a matrix. Set $N=2^{t}$, and let $y_1,\dots,y_{N}$ be the enumeration of the elements of $\{0,1\}^{t}$ in the colexicographic order. Then, for $k=1,\dots,N$, we define the $k$-th column $u_k$ of our integer matrix $A$ as follows:
    $$u_k=\sum_{i=1}^{t}2^{i}\cdot y_k(i)\cdot s_i.$$
    Given $1\leq k<\ell\leq N$, and writing $b=\delta(y_k,y_{\ell})$, we have
    $$u_{\ell}-u_k=\sum_{i=1}^{t}2^{i}\cdot (y_{\ell}(i)-y_k(i))\cdot s_i=2^{b}\cdot s_b+\sum_{i=1}^{b-1}2^i\cdot (y_\ell(i)-y_k(i))\cdot s_i.$$
    Here, we used that  $y_1,\dots,y_k$ are ordered colexicographically, ensuring that $y_{\ell}(b)-y_k(b)=1$. From this, it is easy to observe that the sign vector of $u_{\ell}-u_k$ is $s_b$. 

    Now let $R\subset [d]$ and $C\subset [N]$, and let us closely examine what it means for the submatrix of $A$ induced by the rows $R$ and columns $C$ to be row-monotone. Suppose that every row in this submatrix is monotone increasing. Then by the previous observation, we must have that for every $k,\ell\in C$ satisfying $k<\ell$, $s_{b}|_R$ is the all 1 vector, where $b=\delta(y_k,y_{\ell})$. In other words, writing $B\subset [t]$ for the set of indices $b$ such that $s_b|_R$ is the all 1 vector, we have $\delta(y_k,y_{\ell})\in B$ for every $k,\ell\in C$, $k\neq \ell$. But then $|C|\leq 2^{|B|}$, as among any $2^{|B|}+1$ vectors in $\{0,1\}^{t}$, one can find two that agrees on every coordinate in $B$. 
    
    Now suppose that $|R|=n$. By the choice of $s_1,\dots,s_t$, there are less than $\log_2 n$ among these vectors whose restriction to $R$ is the all 1 vector. Hence, $|B|< \log_2 n$, and thus $|C|< n$. To summarize, this means that $A$ contains no $n\times n$ submatrix, in which all rows are monotone increasing. We can proceed similarly in the monotone decreasing case, finishing the proof.
\end{proof}

Let us highlight the following property of the construction presented in the previous proof. The relative order of any two column vectors $u_k$ and $u_{\ell}$ only depends $\delta(y_k,y_{\ell})$. One might visualize this with the help of a binary tree of height $t$, whose leaf labeled with $k$ corresponds to the vector $u_k$. Then the relative order of $u_k$ and $u_{\ell}$ only depends on the common ancestor of $k$ and $\ell$. In order to prove our upper bound, we show that every sufficiently long sequence of vectors contains a long subsequence with this property. 

 \section{Upper bound for row-monotone matrices  --- Proof of Theorem \ref{thm:main_upperbound}}\label{sect:2}

In this section, we prove Theorem \ref{thm:main_upperbound}. Given a matrix, we consider the sequence of its columns vectors. First, with the help of a series of lemmas, we trim this sequence to get a subsequence which highly resembles our construction presented in the proof of Theorem \ref{thm:main_lowerbound}. 

Let us start with some definitions. With slight abuse of notation, we view a sequence  $v_1,\dots,v_N$ as a set $\{(v_i,i):i\in [N]\}$, so set operations $\cup$ and $\cap$ on subsequences are treated accordingly. Given a sequence of vectors $v_1,\dots,v_N\in \mathbb{R}^d$ and $s\in\{-,+\}^d$, we write $v_i\prec_s v_j$ if $i<j$ and $\mbox{sign}(v_j-v_i)=s$. Also, given two subsequences $V$ and $W$, write $V\prec_s W$ if $v\prec_s w$ for every $v\in V,w\in W$. 

\begin{lemma}\label{lemma:bipartite}
Let $v_1,\dots,v_N\in \mathbb{R}^d$ such that $v_i(a)\neq v_j(a)$ for every $a\in [d]$ and $1\leq i<j\leq N$. Then there exists $s\in\{-,+\}^d$ and two subsequences $A$ and $B$ such that $|A|=|B|\geq N/2^{d+1}$, and $A\prec_s B$. 
\end{lemma}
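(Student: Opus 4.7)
The plan is to split the sequence at its midpoint and then process the $d$ coordinates one at a time, halving the sizes of the two blocks at each step while fixing one coordinate of the sign vector $s$.

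First I would set $A_0=(v_1,\dots,v_{\lfloor N/2\rfloor})$ and $B_0=(v_{\lfloor N/2\rfloor+1},\dots,v_N)$, discarding one element if necessary so that $|A_0|=|B_0|\geq N/2$. By construction every element of $A_0$ precedes every element of $B_0$ in the original sequence, so the ordering-in-time part of $\prec_s$ is automatic, and the only remaining task is to choose the sign vector coordinate by coordinate.

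The key halving procedure is the following. Suppose at some stage I have subsequences $A,B$ of equal size $m$ with $A$ preceding $B$, and I wish to fix coordinate $a$. Sort the $2m$ values $\{u(a):u\in A\}\cup\{w(a):w\in B\}$ (all distinct by hypothesis), and let $L$ and $H$ denote the subsets of $A\cup B$ corresponding to the bottom $m$ and top $m$ values, respectively. Writing $x=|L\cap A|$, the identities $|L|=|A|=m$ force $|L\cap B|=m-x$ and $|H\cap B|=x$. If $x\geq m/2$ I would take $A'=L\cap A$, $B'=H\cap B$, $s_a=+$; otherwise I would take $A'=H\cap A$, $B'=L\cap B$, $s_a=-$. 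In either case $|A'|=|B'|=\max(x,m-x)\geq m/2$, and for every $(u,w)\in A'\times B'$ the sign of $w(a)-u(a)$ is exactly $s_a$.

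Iterating this for $a=1,\dots,d$ produces subsequences $A_d,B_d$ with $|A_d|=|B_d|\geq |A_0|/2^d\geq N/2^{d+1}$ and a vector $s=(s_1,\dots,s_d)$ satisfying $A_d\prec_s B_d$; the sign fixed in round $a$ survives all later rounds because we only pass to subsequences. The main thing to get right, and what delivers the claimed bound, is that pairing lows-from-$A$ with highs-from-$B$ (or its symmetric counterpart) keeps the two blocks \emph{exactly} balanced, so we lose only a factor of $2$ per coordinate. A more naive Ramsey-type pigeonhole over the $2^d$ possible sign vectors between the two halves — or independent majority arguments coordinate by coordinate — would not recover the clean bound $N/2^{d+1}$.
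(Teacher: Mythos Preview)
Your proposal is correct and essentially identical to the paper's proof: both split the sequence at its midpoint and then process one coordinate at a time by sorting $A\cup B$ on that coordinate, partitioning into a low half $L$ and a high half $H$, and using the counting identity $|L\cap A|=|H\cap B|$ (your $x$) to keep the two blocks exactly balanced while losing only a factor of $2$ per step. If anything, your writeup is slightly more explicit than the paper's about why the sizes of the two sides stay equal; the only cosmetic blemish is the claim $|A_0|=|B_0|\ge N/2$ when $N$ is odd, but the paper adopts the same convention of ignoring floors.
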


\begin{proof}
We prove the following statement by induction: For every $0 \leq t \leq d$ there exists $s \in \{-,+\}^t$ and two subsequences $A$ and $B$ such that $|A| = |B| \geq \frac{N}{2^{t+1}}$, and $A_t\prec_{s} B_t$. Here for a set $C$, we denote the restriction of the vectors of $C$ onto their first $t$ coordinates by $C_t$. Then the statement of the lemma follows by taking $t = d$. 

We prove this statement by induction on $t$. Our base case is $t=0$, in which case we take $A=\{v_1,\dots,v_{N/2}\}$ and  $B=\{v_{N/2+1},\dots,v_N\}$. The second condition is automatically satisfied, as there are no coordinates to be compared. 

Suppose that $t\geq 1$. Let $s'\in\{-,+\}^{t-1}$ and $A',B'$ be two subsequences such that $|A'|=|B'|\geq N/2^t$ and $A'_{t-1}\prec_{s'} B'_{t-1}$. Order the vectors of $A'\cup B'$ according to their $t$-th coordinate in a monotone increasing manner. Let $X$ be the first half of the vectors in this order, and let $Y$ be the set of the second half. As $|A'\cap X|+|A'\cap Y|=|A'|\geq N/2^t$,  one of $|A'\cap X|$ and  $|A'\cap Y|$ is at least $N/2^{t+1}$. Assume that $|A'\cap X|\geq N/2^{t+1}$, since the other case can be handled similarly. Set $A=A'\cap X$ and $B=B'\cap Y$. Then $|A|=|B|\geq N/2^{t+1}$. Let $s\in \{-,+\}^{t}$ be the vector that agrees with $s'$ on the first $t-1$ coordinates, and $s(t)=+$. Then $A_t\prec_s B_t$, finishing the proof.
\end{proof}

Next, we will apply Lemma \ref{lemma:bipartite} repeatedly to get a long subsequence with certain special structure. We describe this structure with the help of the following definitions.

\begin{definition}
    A \emph{$d$-vector-labeled} (or simply \emph{vector-labeled}) rooted tree is a pair $(T,\lambda)$, where $T$ is a tree and $\lambda$ assigns an element of $\{-,+\}^d$ to each non-leaf vertex of $T$.
\end{definition}

 Recall that $BT_m$ denotes the perfect rooted binary tree of depth $m$, and $\delta(v,w)$ is the common ancestor of $v$ and $w$.

\begin{definition}
    A sequence of vectors $v_1,\dots,v_{2^m}\in \mathbb{R}^d$ is \emph{binary-tree-like} if there exists a $d$-vector-labeled binary tree $(BT_m,\lambda)$ such that for any $1\leq i<j\leq 2^m$, $\lambda(\delta(i,j))=\mbox{sign}(v_j-v_i)$. Say that $(BT_m,\lambda)$ is \emph{associated} with the sequence.
\end{definition}

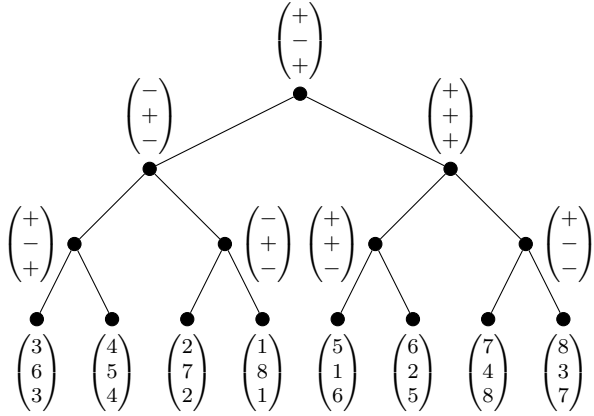
\begin{figure}
\begin{center}
    \begin{tikzpicture}
    \begin{scriptsize}
        \node[vertex,label=below:$\begin{pmatrix} 3 \\ 6 \\ 3 \end{pmatrix}$] (v1) at (-3.5,0) {};
        \node[vertex,label=below:$\begin{pmatrix} 4 \\ 5 \\ 4 \end{pmatrix}$] (v2) at (-2.5,0) {};
        \node[vertex,label=below:$\begin{pmatrix} 2 \\ 7 \\ 2 \end{pmatrix}$] (v3) at (-1.5,0) {};
        \node[vertex,label=below:$\begin{pmatrix} 1 \\ 8 \\ 1 \end{pmatrix}$] (v4) at (-0.5,0) {};
        \node[vertex,label=below:$\begin{pmatrix} 5 \\ 1 \\ 6 \end{pmatrix}$] (v5) at (0.5,0) {};
        \node[vertex,label=below:$\begin{pmatrix} 6 \\ 2 \\ 5 \end{pmatrix}$] (v6) at (1.5,0) {};
        \node[vertex,label=below:$\begin{pmatrix} 7 \\ 4 \\ 8 \end{pmatrix}$] (v7) at (2.5,0) {};
        \node[vertex,label=below:$\begin{pmatrix} 8 \\ 3 \\ 7 \end{pmatrix}$] (v8) at (3.5,0) {};

        \node[vertex,label=left:$\begin{pmatrix} + \\ - \\ + \end{pmatrix}$] (v12) at (-3,1) {};
        \node[vertex,label=right:$\begin{pmatrix} - \\ + \\ - \end{pmatrix}$] (v34) at (-1,1) {};
        \node[vertex,label=left:$\begin{pmatrix} + \\ + \\ - \end{pmatrix}$] (v56) at (1,1) {};
        \node[vertex,label=right:$\begin{pmatrix} + \\ - \\ - \end{pmatrix}$] (v78) at (3,1) {};

        \node[vertex,label=above:$\begin{pmatrix} - \\ + \\ - \end{pmatrix}$] (w1) at (-2,2) {};
        \node[vertex,label=above:$\begin{pmatrix} + \\ + \\ + \end{pmatrix}$] (w2) at (2,2) {};

        \node[vertex,label=above:$\begin{pmatrix} + \\ - \\ + \end{pmatrix}$] (w0) at (0,3) {};

        \draw (v1) -- (v12) -- (w1) -- (w0) ;
        \draw (v2) -- (v12) ;
        \draw (v3) -- (v34) -- (w1) ;
        \draw (v4) -- (v34) ;

        \draw (v5) -- (v56) -- (w2) -- (w0) ;
        \draw (v6) -- (v56) ;
        \draw (v7) -- (v78) -- (w2) ;
        \draw (v8) -- (v78) ;
    \end{scriptsize}
    \end{tikzpicture}
\end{center}
\caption{An illustration of a binary-tree-like sequence of 3-dimensional vectors with the associated vector-labeled binary tree.}  
\label{figure:1}
\end{figure}

See Figure \ref{figure:1} for an illustration of this definition. Note that if $v_1,\dots,v_{2^m}$ is binary-tree-like, then $(BT_m,\lambda)$ is unique, and every $d$-vector-labeled binary tree is associated with a unique binary-tree-like sequence up to order isomorphism. The next is our first key lemma, which tells us that we can find a long binary tree like subsequence in every sequence of vectors.

\begin{lemma}\label{lemma:tree_subsequence}
  Let $N\geq 2^{m(d+1)}$ and $v_1,\dots,v_N\in \mathbb{R}^d$ such that $v_i(a)\neq v_j(a)$ for every $a\in [d]$ and $1\leq i<j\leq N$. Then $v_1,\dots,v_N\in \mathbb{R}^d$ contains a binary-tree-like subsequence of length $2^{m}$.
\end{lemma}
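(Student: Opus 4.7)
The plan is to prove the lemma by induction on $m$, using Lemma \ref{lemma:bipartite} as the engine at each step. The base case $m=0$ is trivial: any single vector is binary-tree-like (associated with $BT_0$, a single root, no labels needed), and the hypothesis $N \geq 2^0 = 1$ is immediate.

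For the inductive step, suppose the statement holds for $m-1$, and consider $v_1, \dots, v_N$ with $N \geq 2^{m(d+1)}$. The first step is to apply Lemma \ref{lemma:bipartite} to the whole sequence, producing a sign vector $s \in \{-,+\}^d$ and two subsequences $A \prec_s B$ with $|A| = |B| \geq N/2^{d+1} \geq 2^{(m-1)(d+1)}$. By the inductive hypothesis applied separately to $A$ and to $B$, each contains a binary-tree-like subsequence of length $2^{m-1}$; call these subsequences $A' \subset A$ and $B' \subset B$, with associated labeled binary trees $(BT_{m-1}, \lambda_A)$ and $(BT_{m-1}, \lambda_B)$ respectively.

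The next step is to glue. Form the concatenated subsequence $A' \cup B'$ (indices in increasing order), which has length $2^m$, and define $(BT_m, \lambda)$ by letting the root have label $s$, its left subtree be $(BT_{m-1}, \lambda_A)$, and its right subtree be $(BT_{m-1}, \lambda_B)$. For any two indices $i < j$ in $A' \cup B'$: if both lie in $A'$ or both in $B'$, then $\delta_{BT_m}(i,j)$ lies in the corresponding subtree and the sign is correct by induction; if $i \in A'$ and $j \in B'$, then $\delta_{BT_m}(i,j)$ is the root, and $\mathrm{sign}(v_j - v_i) = s = \lambda(\mathrm{root})$ because $A \prec_s B$. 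Hence $A' \cup B'$ is binary-tree-like with associated tree $(BT_m, \lambda)$.

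I do not expect a serious obstacle here — the argument is essentially a clean induction, and the bound $N \geq 2^{m(d+1)}$ is tailored so that one factor of $2^{d+1}$ is consumed at each of the $m$ recursive applications of Lemma \ref{lemma:bipartite}. The only thing to be mildly careful about is bookkeeping: ensuring that after passing to $A$ and $B$, the coordinate-distinctness hypothesis $v_i(a) \neq v_j(a)$ still holds (it does, since it is inherited by any subsequence), and that the combined tree correctly records the sign pattern for cross-pairs between $A'$ and $B'$, which follows directly from the definition of $\prec_s$.
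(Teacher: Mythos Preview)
Your proof is correct and follows essentially the same approach as the paper: induct on $m$, apply Lemma~\ref{lemma:bipartite} at the top level to obtain $A\prec_s B$, and then build the labeled $BT_m$ by placing $s$ at the root and recursing on the two halves. The paper phrases the induction slightly differently---it strengthens the hypothesis to maintain $2^k$ equal-sized subsequences $A_1,\dots,A_{2^k}$ and splits all of them at once at each level---but unrolling your recursion yields exactly that iteration, and both consume one factor of $2^{d+1}$ per level.
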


\begin{proof}
    We prove the following stronger statement by induction on $m$. There exists a vector-labeled binary tree $(BT_m,\lambda)$ and $2^m$ subsequences $A_1,\dots,A_{2^m}$ such that $|A_1|=\dots=|A_{2^m}|\geq N/2^{m(d+1)}$, and $A_i\prec_s A_j$ for every $1\leq i<j\leq 2^m$, where $s=\lambda(\delta(i,j))$.
    
    In case $m=1$, apply Lemma \ref{lemma:bipartite} to get $s\in\{-,+\}^d$ and $A_1, A_2$ such that $|A_1|=|A_2|\geq N/2^{d+1}$, and $A_1\prec_s A_2$. Then $(BT_1,\lambda)$ suffices, where $\lambda$ assigns $s$ to the root. Now suppose that $m\geq 2$, and we are given $(BT_{m-1},\lambda')$ and $A_1',\dots,A_{2^{m-1}}'$ satisfying the desired properties. For each $A_i'$, apply Lemma \ref{lemma:bipartite} to get $s_i\in\{-,+\}^d$ and two subsequences $A_{2i-1}$ and $A_{2i}$ such that $$|A_{2i-1}|=|A_{2i}|\geq \frac{|A_i'|}{2^{d+1}}\geq \frac{N}{2^{m(d+1)}}$$ and $A_{2i-1}\prec_{s_i} A_{2i}$. Define the labeling $\lambda$ on $BT_m$ such that every node of depth at most $m-2$ inherits the label from $\lambda'$, and $\delta(2i-1,2i)$ receives the label $s_i$. Then $A_1,\dots,A_{2^m}$ and $(BT_m,\lambda)$ suffices.
\end{proof}

In what follows, we find an even more structured subsequence in a binary-tree like sequence, where we require that vertices of the same depth in the associated tree have the same label as well. Given a vector-labeling $\lambda$ on $T$, we write $(T[S],\lambda)$ instead of $(T[S],\lambda|_{V(T[S])})$ for ease of notation.

\begin{definition}
A vector-labeled tree $(T,\lambda)$ is \emph{layered}, if the label of each non-leaf vertex only depends on its depth. Furthermore, given a subset of leaves $S\subset [2^{m}]$ in the vector-labeled binary tree $(BT_m,\lambda)$, we say that $S$ is \emph{perfect} if $BT_m[S]$ is a perfect rooted binary tree, and $(BT_m[S],\lambda)$ is layered. 
\end{definition}

Next, we prove that every vector-labeled binary tree contains a large perfect set of leaves.

\begin{lemma}\label{lemma:perfect}
    Let $m,t,d$ be positive integers such that $2^m\geq (2^{d+1}m)^t$, and let $(BT_m,\lambda)$ be $d$-vector-labeled. Then there exists a perfect $S\subset [2^m]$ of size $2^t$.  
\end{lemma}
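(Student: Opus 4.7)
The plan is to prove the lemma by induction on $t$, with the base case $t=0$ handled trivially: any single leaf gives an induced subtree with no non-leaf vertices, making the layering condition vacuous.

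For the inductive step I would assume the claim for $t-1$ and, given $(BT_m,\lambda)$ with $2^m \geq (2^{d+1}m)^t$, pick a depth $\ell \in \{1,\ldots,m-1\}$ to be optimized and look at the $2^\ell$ vertices at depth $\ell$. Each such vertex $v$ roots a subtree isomorphic to $BT_{m-\ell}$, and whenever $L := m-\ell$ satisfies $2^L \geq (2^{d+1}L)^{t-1}$, the inductive hypothesis supplies a perfect $2^{t-1}$-set $S_v$ in that subtree. I would then classify each $S_v$ by its \emph{label sequence}---the tuple $(s_1,\ldots,s_{t-1}) \in (\{-,+\}^d)^{t-1}$ giving, for each $i$, the common label of the level-$i$ non-leaves of $BT_m[S_v]$. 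There are at most $2^{d(t-1)}$ such sequences, so once $2^\ell > 2^{d(t-1)}$, pigeonhole produces two distinct depth-$\ell$ vertices $v_1 \neq v_2$ whose chosen perfect $2^{t-1}$-sets share a label sequence.

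Having found $v_1$ and $v_2$, I combine their perfect sets. Since $v_1,v_2$ lie at the same depth, they are unrelated in $BT_m$, and so are the roots of $BT_m[S_{v_1}]$ and $BT_m[S_{v_2}]$ (each a descendant of the corresponding $v_i$). Claim \ref{claim:union} then identifies $BT_m[S_{v_1}\cup S_{v_2}]$ as the two height-$(t-1)$ perfect subtree-induced-trees attached to the new root $\delta(v_1,v_2)$, yielding a perfect binary tree of height $t$. Since the two label sequences agree, levels $1,\ldots,t-1$ of the combined tree each carry a single label, while the new root occupies level $0$ by itself, so the combined tree is automatically layered. Hence $S_{v_1} \cup S_{v_2}$ is the required perfect set of size $2^t$.

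The main obstacle I expect is the bookkeeping: one must verify that the single hypothesis $2^m \geq (2^{d+1}m)^t$ is strong enough to allow a choice of $\ell$ for which the inductive hypothesis $2^{m-\ell} \geq (2^{d+1}(m-\ell))^{t-1}$ and the pigeonhole requirement $\ell \geq d(t-1)+1$ hold simultaneously. Concretely, one picks $\ell$ so that $L = m-\ell$ is the smallest value satisfying the inductive hypothesis, and then checks that the remaining $\ell = m - L$ exceeds $d(t-1)+1$. The extra factor of $2$ in $2^{d+1}$ (rather than just $2^d$) and the polynomial-in-$m$ term $m^t$ in the hypothesis are precisely what supply the slack needed to absorb both the outer pigeonhole cost and the growth of the $\log_2 L$ term through the telescoping of the induction.
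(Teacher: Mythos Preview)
Your inductive scheme is natural, but the bookkeeping you flag as ``the main obstacle'' does not close. At the top level you pigeonhole over \emph{all} label sequences of a height-$(t-1)$ tree, of which there are $(2^d)^{t-1}=2^{d(t-1)}$, so you need $\ell\ge d(t-1)+1$. Unrolling the induction, the total depth consumed is at least
\[
\sum_{k=1}^{t}\bigl(d(k-1)+1\bigr)=d\binom{t}{2}+t,
\]
i.e.\ you need $m=\Omega(dt^2)$. But the hypothesis $2^m\ge(2^{d+1}m)^t$ only gives $m\ge (d+1)t+t\log_2 m$, which is $\Theta((d+\log m)t)$. The ``extra factor of $2$'' and the $m^t$ term supply only $2^{d+1}m$ of slack per inductive step, nowhere near $2^{d(t-1)}$. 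A concrete failure: take $d=10$, $t=5$; the hypothesis is satisfied at $m=88$, yet the smallest $L$ with $2^L\ge(2^{11}L)^4$ is $L\approx 69$, leaving $\ell=19<41=d(t-1)+1$.

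The paper's argument avoids this by strengthening the induction hypothesis: it carries not one perfect $2^k$-set but \emph{many} of them (at least $2^m/(2^{d+1}m)^k$), all sharing the same depth-and-label profile in $BT_m$. Because the profiles already match, merging two of them into a perfect $2^{k+1}$-set requires pigeonholing only on the depth and label of the single new root $q_j=\delta(r_{2j-1},r_{2j})$, a cost of $m\cdot 2^d$ (times $2$ for the pairing). Iterating $t$ times yields exactly the stated bound $(2^{d+1}m)^t$. The missing idea in your approach is precisely this: maintain a large family of synchronised copies so that each step pays for one level, not for the whole sequence.
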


\begin{proof}
We prove the following stronger statement by induction. For $k=0,\dots,t$, there exist $S_1^{(k)},\dots,S_{z_k}^{(k)}\subset [2^m]$  such that
\begin{itemize}
    \item[(i)] $z_k\geq 2^m/(2^{d+1}m)^k$,
    \item[(ii)] $|S_i^{(k)}|=2^{k}$,
    \item[(iii)] $S_i^{(k)}$ is perfect,
    \item[(iv)] for every $1\leq i\leq j\leq z_k$, if a vertex $v$ has the same depth in $BT_m[S_i^{(k)}]$ as a vertex $w$ in $BT_m[S_j^{(k)}]$, then $v$ and $w$ has the same depth and label in $(BT_m,\lambda)$.
\end{itemize}
If this is true, we can conclude the proof by taking $S=S_1^{(t)}$.

In case $k=0$, let $S_i^{(0)}=\{i\}$ for $i=1,\dots,2^m$, then the desired conditions are satisfied. Suppose that $k\geq 1$ and that $S_i^{(k-1)}$ is given for $i=1,\dots,z_{k-1}$ satisfying (i)-(iv). Let $r_i$ denote the root of $S_i^{(k-1)}$, and for $j=1,\dots,z_{k-1}/2$, let $q_j=\delta(r_{2j-1},r_{2j})$. Here, we remark that $r_1,\dots,r_{z_{k-1}}$ are pairwise distinct, and also $q_1,\dots,q_{z_{k-1}/2}$ are pairwise distinct by Claim \ref{claim:distinct_root}. Furthermore, $r_1,\dots,r_{z_{k-1}}$ are pairwise not related as they have the same depth in $BT_m$.

As there are $2^{d}$ possible labels, we can choose $J\subset [z_{k-1}/2]$ of size at least $z_{k-1}/(m2^{d+1})$ such that every $q_j$ for $j\in J$ has the same depth and label in $(BT_m,\lambda)$. Writing $z_k:=|J|\geq z_{k-1}/(m2^{d+1})\geq 2^m/(2^{d+1}m)^k$ and enumerating the elements of $J$ as $j_1<\dots<j_{z_k}$, the sets $$S_{i}^{(k)}=S_{2j_i-1}^{(k-1)}\cup S_{2j_i}^{(k-1)}$$ satisfy the properties (i)-(iv). Here, we used Claim \ref{claim:union} to conclude that $BT_m[S_{i}^{(k)}]$ is a binary tree of height $k$ as the roots of $S_{2j_i-1}^{(k-1)}$ and $S_{2j_i}^{(k-1)}$ are not related (see also the remark after Claim \ref{claim:distinct_root}).
\end{proof}

Now everything is set to prove the main theorem of this section.

\begin{proof}[Proof of Theorem \ref{thm:main_upperbound}]
We show that $c=1000$ suffices. Let $d=8n^2$, $s=\lceil \log_2 n\rceil$, $t=4s^2$, $m=2dt\geq 32n^2(\log_2 n)^2$, and $N\geq 2^{cn^4(\log_2 n)^2}$. Consider a $d\times N$ matrix $A$ given by the sequence of its column vectors $v_1,\dots,v_N\in \mathbb{R}^d$. Without loss of generality, we may assume that $v_i(a)\neq v_j(a)$ for every $1\leq i<j\leq N$, $a\in [d]$, otherwise we may apply some small perturbation, and note that any row-monotone submatrix in the perturbed matrix is also row-monotone in the original. 

As $N\geq 2^{1000n^4(\log_2 n)^2}>2^{m(d+1)}$, we can apply Lemma \ref{lemma:tree_subsequence} to find a binary-tree-like subsequence $u_1,\dots,u_{2^m}$ with associated $d$-vector-labeled binary tree $(BT_m,\lambda)$. Our goal is to construct $D\subset [d]$ and $Q\subset [2^m]$ such that the sequence $(u_i|D:i\in Q)$ is either monotone increasing, or monotone decreasing in every coordinate. This corresponds to an $n\times n$ row-monotone submatrix of $A$.

First of all, we have $2^{d+1}m\leq 2^{2d}$ assuming $n$ is sufficiently large, so the inequality $2^m=2^{2dt}\geq (2^{d+1}m)^t$ is satisfied. Hence, by Lemma \ref{lemma:perfect}, we can find $S\subset [2^m]$ of size $2^t$ such that $S$ is perfect in $(BT_m,\lambda)$. Let $T_0=BT_m[S]$, then $(T_0,\lambda)$ is a layered perfect rooted binary tree of height $t$. Therefore, there exists $w_0,\dots,w_{t-1}\in \{-,+\}^d$ such that the label of a vertex of $T_0$ of depth $i$ is $w_i$ for $i=0,\dots,t-1$. 

Consider the $d\times t$ sized matrix with columns $w_0,\dots,w_{t-1}$. As $t\geq 4s^2$ and $d\geq 4n \cdot 2^s$, we can apply Lemma \ref{lemma:zarankiewicz} to find $D\subset [d]$ of size $n$ and  $Z\subset \{0,\dots,t-1\}$ of size $s$ such that $w_i(j)$ is the same for every $(i,j)\in Z\times D$. Without loss of generality, we may assume that $w_i(j)=+$ for every $(i,j)\in Z\times D$.

Finally, apply Claim \ref{claim:levels} to find $Q\subset S$ of size $2^s\geq n$ such that $T_1:=T_0[Q]=BT_m[Q]$ is a binary tree, and the depth of every vertex of $T_1$ with respect to $T_0$ is in $Z$. Then the label of every non-leaf vertex of $(T_1,\lambda)$ is an element of $\{w_i:i\in Z\}$. In particular, every label appearing in $(T_1,\lambda)$ is $+$ on every coordinate in $D$. But this means that the sequence $(u_i|_D:i\in Q)$ is monotone increasing in every coordinate, as by definition, the sign vector of $u_j-u_i$ is $\lambda(\delta(i,j))$ for $i<j$. This finishes the proof.
    
\end{proof}

\section{Monotone matrices --- Proof of Theorem \ref{thm:main}}\label{sect:3}

In this section, we present the proof of Theorem \ref{thm:main} following the ideas of \cite{BST,GKS}. In particular, we prove the following stronger result.

\begin{theorem}
    There exists a constant $c_0>0$ such that every $64n^4\times 2^{c_0n^4(\log n)^2}$ matrix contains an $n\times n$ monotone submatrix.
\end{theorem}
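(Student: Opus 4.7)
The plan is to combine Theorem \ref{thm:main_upperbound} with a follow-up Ramsey-type argument, in the spirit of \cite{BST, GKS}. The main challenge is to upgrade \emph{row}-monotonicity (as given by Theorem \ref{thm:main_upperbound}) to full \emph{monotonicity} while staying within the row budget of $64n^4$ and the column budget of $2^{c_0 n^4(\log n)^2}$. A naive two-step approach---first extract a row-monotone $K_1\times K_2$ submatrix, then apply Theorem \ref{thm:main_upperbound} to its transpose---is infeasible: extracting a row-monotone submatrix with one side of size $\geq 2^{\Omega(n^4(\log n)^2)}$ would require vastly more than $64n^4$ starting rows. Thus the two Ramsey arguments must be integrated within the proof of Theorem \ref{thm:main_upperbound}, exploiting the rich structured submatrix it produces internally.

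Concretely, using $8n^2$ of the $64n^4$ rows, I would apply Lemma \ref{lemma:tree_subsequence} to find a binary-tree-like subsequence of columns, then Lemma \ref{lemma:perfect} to extract a perfect layered subtree $T_0$ with $|S|=2^t=n^{\Theta(\log n)}$ leaves, equipped with sign-vector labels $w_0,\dots,w_{t-1}\in\{-,+\}^{8n^2}$ controlling every pairwise column difference. Rather than a single application of Lemma \ref{lemma:zarankiewicz} yielding only row-monotone $n\times n$, I would use a two-parameter Ramsey argument that jointly selects $D\subset[8n^2]$, $Z\subset\{0,\dots,t-1\}$, and $Q\subset S$ such that the labels $w_z$ restricted to $D$ are constant ($+$, say) for $z\in Z$ (giving row-monotonicity), \emph{and} each column vector $u_q$ for $q\in Q$ is monotone on $D$ in a common direction (giving column-monotonicity). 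The remaining $\Theta(n^4)$ reserve rows from $64n^4-8n^2$ provide the slack needed to simultaneously secure both conditions; heuristically, the factor of $8n^2$ increase (from $8n^2$ to $64n^4$) precisely pays for the secondary pigeonhole over column monotonicity patterns.

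The main obstacle is the column-monotonicity constraint: the binary-tree-like structure only controls the \emph{signs} of pairwise column differences, not the \emph{monotonicity} of individual columns across rows, so the combinatorial tools for sign patterns (Lemmas \ref{lemma:zarankiewicz} and \ref{lemma:bipartite}) do not directly apply to column-monotonicity. To overcome this, I would exploit the huge overhead $|S|=n^{\Theta(\log n)}$ versus the $n$ columns we ultimately need: for any fixed candidate $D$ of size $n$ coming out of the label-Ramsey step, each column vector $u_q$ restricted to $D$ induces a permutation pattern in $S_n$, and by a pigeonhole on these patterns (or on the $\{-,+\}^n$ sign sequence of consecutive differences), many columns in $S$ must share the \emph{monotone} pattern, after which Claim \ref{claim:levels} can be used to select an $n$-subset $Q$ from these whose common ancestor depths lie in $Z$. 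The key technical step will be to verify that this pigeonhole indeed succeeds with the available budget $|S|=n^{\Theta(\log n)}$ and with only $O(n^4)$ reserve rows, and to interleave it cleanly with the standard label-based Ramsey step so as not to blow up the column count beyond $2^{c_0 n^4(\log n)^2}$.
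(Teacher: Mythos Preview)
Your plan has a genuine gap, and in fact the paper's proof is far simpler than the route you are attempting.

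The pigeonhole step you propose at the end does not work, both qualitatively and quantitatively. Qualitatively: pigeonholing the columns $u_q|_D$ by their permutation pattern (or by their $\{-,+\}^{n-1}$ sign sequence of consecutive differences) yields many columns sharing \emph{some} pattern, but there is no reason whatsoever for that common pattern to be the monotone one. The binary-tree-like structure constrains only the signs of pairwise \emph{column} differences $u_j-u_i$; it says nothing about the internal order of the entries of a single column on $D$. Quantitatively: with the parameters from the proof of Theorem~\ref{thm:main_upperbound} you have $|S|=2^t$ with $t=4\lceil\log_2 n\rceil^2$, so $|S|=n^{O(\log n)}=2^{O((\log n)^2)}$, while the number of sign sequences on $D$ is $2^{n-1}$ and the number of permutation patterns is $n!$. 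Both vastly exceed $|S|$, so the pigeonhole yields fewer than one column per class. Your ``reserve rows'' do not help here, because the shortage is in the number of columns $|S|$, not in the number of rows; and you never specify any mechanism by which the unused $64n^4-8n^2$ rows would interact with the structure built on the first $8n^2$ rows.

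The paper avoids all of this by securing column-monotonicity \emph{before} invoking Theorem~\ref{thm:main_upperbound}, rather than after or inside it. Since $64n^4=(8n^2)^2$, the Erd\H{o}s--Szekeres theorem gives in each column a monotone subsequence of length $8n^2$; after discarding half the columns we may assume it is increasing. Pigeonholing over which $8n^2$-subset of the $64n^4$ rows carries this increasing subsequence costs only a factor of $\binom{64n^4}{8n^2}=n^{O(n^2)}$ in the column count, which is negligible against $2^{c_0 n^4(\log n)^2}$. On the resulting $8n^2\times 2^{\Omega(n^4(\log n)^2)}$ submatrix every column is already increasing, so a single black-box application of Theorem~\ref{thm:main_upperbound} gives an $n\times n$ row-monotone, hence fully monotone, submatrix. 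No interleaving is needed.
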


\begin{proof}
    We may assume that $n$ is sufficiently large by increasing $c_0$. Let $c>0$ be the constant given by Theorem \ref{thm:main_upperbound}, then we show that $c_0=2c$ suffices. Let $d=64n^4$, $N=2^{c_0n^4(\log n)^2}$, and let $A$ be a $d\times N$ matrix. By the Erd\H{o}s-Szekeres theorem, every column contains a monotone subsequence of length $\ell=8n^2$. Without loss of generality, we may assume that at least half of the columns contain a monotone increasing subsequence of such length. Furthermore, by the pigeonhole principle, we can find $N/2\binom{d}{\ell}$ columns such that this monotone increasing subsequence is contained in the same set $R$ of rows. Let $C$ be such a set of columns, and consider the submatrix of $A'$ of $A$ induced by the rows $R$ and columns $C$.  Here, $\binom{d}{\ell}\leq d^{\ell}=n^{O(n^2)}$, and so $|C|\geq N/2\binom{d}{\ell}>2^{cn^4(\log n)^2}$ (assuming $n$ is sufficiently large). Therefore, we can apply Theorem \ref{thm:main_upperbound} to find an $n\times n$ row-monotone submatrix in $A'$. As every column of $A'$ is monotone increasing, this also gives an $n\times n$ monotone submatrix, finishing the proof.
\end{proof}

\section*{Acknowledgments}

RAC is supported by the Swedish Research Council grant VR 2021-03687.\\
ZJ is supported by the SNSF grant 200021\_196965.\\
ER is supported by postdoctoral grant 213-0204 from the Olle Engkvist Foundation.


\end{document}